     \def\section{\@startsection{section}{1}%
     \z@{.7\linespacing\@plus\linespacing}{.5\linespacing}%
     {\bfseries%\normalfont\scshape
     \centering
     }}
     \def\@secnumfont{\bfseries}
\newtheorem{theorem}{Theorem}[section]
\newtheorem{lemma}[theorem]{Lemma}
\newtheorem{corollary}[theorem]{Corollary}
\theoremstyle{definition}
\theoremstyle{remark}
\numberwithin{equation}{section}
\newcommand{\RR}{\mathbb{R}}
\newcommand{\EEE}{\mathcal{E}}
\newcommand{\FFF}{\mathcal{F}}
\newcommand{\GGG}{\mathcal{G}}
\newcommand{\df}[1]{\,\mathrm{d}#1}                         % Integrator
\newcommand{\eps}{\varepsilon}                              % Epsilon
\begin{document}

\setlength{\parindent}{0cm}
\setlength{\parskip}{0.5cm}

\title[Optimal Novikov-type criteria]{Optimal Novikov-type criteria
  for local martingales with jumps}

\author{Alexander Sokol}

\address{Alexander Sokol: Institute of Mathematics, University of
  Copenhagen, 2100 Copenhagen, Denmark}
\email{alexander@math.ku.dk}
\urladdr{http://www.math.ku.dk/$\sim$alexander}

\subjclass[2000] {Primary 60G44; Secondary 60G40}

\keywords{Martingale, Exponential martingale, Uniform integrability,
  Novikov, Optimal, Poisson process}

\begin{abstract}
We consider local martingales $M$ with jumps larger than $a$ for some
$a$ larger than or equal to $-1$, and prove Novikov-type criteria for
the corresponding exponential local martingale
to be a uniformly integrable martingale. We obtain criteria using both
the quadratic variation and the predictable quadratic variation. We
prove optimality of the coefficients in the criteria. As a corollary,
we obtain a verbatim extension of the classical Novikov criterion for
continuous local martingales to the case of local martingales with nonnegative jumps.
\end{abstract}

\maketitle

\noindent

\section{Introduction}
\label{sec:intro}

The motivation of this paper is the question of when an
exponential local martingale is a uniformly integrable
martingale. Before introducing this problem, we fix our notation and
recall some results from stochastic analysis.

Assume given a filtered probability space $(\Omega,\FFF,(\FFF_t)_{t\ge0},P)$
satisfying the usual conditions, see \cite{PP} for the definition
of this and other probabilistic concepts such as being a local martingale, locally
integrable, locally square-integrable, and for the quadratic variation
and quadratic covariation et cetera. For any local martingale
$M$, we say that $M$ has initial value zero if $M_0=0$. For any local
martingale $M$ with initial value zero, we denote by $[M]$ the
quadratic variation of $M$, that is, the unique increasing adapted
process with initial value zero such that $M^2-[M]$ is a local martingale. If $M$ furthermore is locally square
integrable, we denote by $\langle M\rangle$ the predictable quadratic
variation of $M$, which is the unique increasing predictable process
with initial value zero such that $[M]-\langle M\rangle$ is a local
martingale.

For any local martingale with initial value zero,
there exists by Theorem 7.25 of \cite{HWY} a unique decomposition
$M=M^c+M^d$, where $M^c$ is a continuous local martingale and $M^d$ is
a purely discontinuous local martingale, both with initial value
zero. Here, we say that a local martingale with initial value zero is
purely discontinuous if it has zero quadratic covariation with
any continuous local martingale with initial value zero. We refer to
$M^c$ as the continuous martingale part of $M$, and refer to $M^d$ as
the purely discontinuous martingale part of $M$.

Let $M$ be a local martingale with
initial value zero and $\Delta M\ge-1$. The exponential martingale of $M$, also known as the
Dol{\'e}ans-Dade exponential of $M$, is the unique c\`{a}dl\`{a}g
solution in $Z$ to the stochastic differential equation $Z_t = 1 +
\int_0^t Z_{s-}\df{M}_s$, given explicitly as
\begin{align}
  \EEE(M)_t &= \exp\left(M_t-\frac{1}{2}[M^c]_t\right)\prod_{0<s\le t}(1+\Delta M_s)\exp(-\Delta M_s),
\end{align}
see Theorem II.37 of \cite{PP}. Applying Theorem 9.2 of
\cite{HWY}, we find that $Z$ always is a local martingale with initial
value one. Also, $\EEE(M)$ is always nonnegative. We wish to
understand when $\EEE(M)$ is a uniformly integrable martingale.

The question of when $\EEE(M)$ is a uniformly integrable
martingale has been considered many times in the litterature, and is
not only of theoretical interest, but has several applications in connection
with other topics. In particular, exponential martingales are of use
in mathematical finance, where checking uniform integrability of
a particular exponential martingale can be used to prove absence of
arbitrage and obtain equivalent martingale measures for option
pricing. For more on this, see \cite{PS} or chapters 10
and 11 of \cite{TB}. Also, exponential martingales arise naturally in
connection with maximum likelihood estimation for stochastic
processes, where the likelihood viewed as a stochastic process often
is an exponential martingale which is a true martingale, see for example the likelihood for
parameter estimation for Poisson processes given in (3.43)
of \cite{KA} or the likelihood for parameter estimation for diffusion
processes given in Theorem 1.12 of \cite{YK}. Finally, exponential martingales which are true martingales can be used in the
explicit construction of various probabilistic objects, for example
solutions to stochastic differential equations, as in Section 5.3.B of
\cite{KS2}.

Several sufficient criteria for $\EEE(M)$ to be a uniformly integrable
martingale are known. First results in this regard were obtained by \cite{AAN}
for the case of continuous local martingales. Here, we are interested in
the case where the local martingale $M$ is not necessarily
continuous. Sufficient criteria for $\EEE(M)$ to be a uniformly
integrable martingale in this case have been obtained by
\cite{LM}, \cite{ISS}, \cite{TO}, \cite{JAY} and \cite{KS}.

We now explain the particular result to be obtained in this paper. In
\cite{AAN}, the following result was obtained: If $M$ is a continuous
local martingale with initial value zero and $\exp(\frac{1}{2}[M]_\infty)$ is integrable, then
$\EEE(M)$ is a uniformly integrable martingale. This criterion is
known as Novikov's criterion. We wish to understand whether
this result can be extended to local martingales which are not
continuous.

In the case with jumps, another process in addition to the
quadratic variation process is relevant: the predictable quadratic
variation. As noted earlier, the predictable quadratic variation is defined for any
locally square-integrable local martingale $M$ with initial value zero, is denoted $\langle
M\rangle$, and is the unique predictable, increasing and locally
integrable process with initial value zero such that $[M]-\langle M\rangle$ is a local
martingale, see p. 124 of \cite{PP}. For a continuous local
martingale $M$ with initial value zero, we have that $M$ always is locally square integrable
and $\langle M\rangle =[M]$.

Using the predictable quadratic variation, the following result is
demonstrated in Theorem 9 of \cite{PS}. Let
$M$ be a locally square integrable local martingale with initial value
zero and $\Delta M\ge-1$. It then holds that if
$\exp(\frac{1}{2}\langle M^c\rangle_\infty+\langle M^d\rangle_\infty)$ is
integrable, then $\EEE(M)$ is a uniformly integrable martingale. This is an extension of the classical Novikov criterion
of \cite{AAN} to the case with jumps. \cite{PS} also argue in Example
10 that the constants in front of $\langle M^c\rangle$ and
$\langle M^d\rangle$ are optimal, although their argument contains a
flaw, namely that the formula (28) in that paper does not hold.

In this paper, we specialize our efforts to the case where
$M$ has jumps larger than or equal to $a$ for some $a\ge-1$ and prove
results of the same type, requiring either that $M$ is a locally square
integrable local martingale and that $\exp(\frac{1}{2}\langle
M^c\rangle+\alpha(a)\langle M^d\rangle)$ is integrable for some
$\alpha(a)$, or that $\exp(\frac{1}{2}[M^c]+\beta(a)[M^d])$ is
integrable for some $\beta(a)$. For all $a\ge-1$, we identify the optimal
value of $\alpha(a)$ and $\beta(a)$, in particular giving an argument circumventing
the problems of Example 10 in \cite{PS}. Our results are stated as
Theorem \ref{theorem:OptimalNovikov} and Theorem
\ref{theorem:OptimalNovikov2}. In particular, we obtain that for local
martingales $M$ with initial value zero and $\Delta M\ge0$, $\EEE(M)$ is a uniformly
integrable martingale if $\exp(\frac{1}{2}[M]_\infty)$ is integrable
or if $M$ is locally square integrable and $\exp(\frac{1}{2}\langle
M\rangle_\infty)$ is integrable, and we obtain that both the constants
in the exponents and the requirement on the jumps of $M$ are
optimal. This result is stated as Corollary
\ref{coro:NovikovExtension} and yields a verbatim extension of the
Novikov criterion to local martingales $M$ with initial value zero and
$\Delta M\ge0$.

\section{Main results and proofs}
\label{sec:main}

In this section, we apply the results of \cite{LM} to obtain optimal
constants in Novikov-type criteria for local martingales with jumps. For $a>-1$ with $a\neq0$, we define
\begin{align}
  \alpha(a)&=\frac{(1+a)\log(1+a)-a}{a^2}\quad\textrm{ and }\\
   \beta(a)&=\frac{(1+a)\log (1+a)-a}{a^2(1+a)},
\end{align}
and put $\alpha(0)=\beta(0)=\frac{1}{2}$ and $\alpha(-1)=1$. The
functions $\alpha$ and $\beta$ will yield the optimal constants in the
criteria we will be demonstrating. Before proving our main results, Theorem
\ref{theorem:OptimalNovikov} and Theorem
\ref{theorem:OptimalNovikov2}, we state three lemmas.

\begin{lemma}
\label{lemma:alphaFun}
The functions $\alpha$ and $\beta$ are continuous, positive and
strictly decreasing. Furthermore, $\beta(a)$ tends to infinity as $a$ tends to
minus one.
\end{lemma}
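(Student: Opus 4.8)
The plan is to reduce every claim to the single auxiliary function $f(a) = (1+a)\log(1+a) - a$, whose derivative $f'(a) = \log(1+a)$ is especially simple. For $a \neq 0$ one has $\alpha(a) = f(a)/a^2$ and $\beta(a) = f(a)/(a^2(1+a))$, so continuity, positivity, monotonicity, and the blow-up at $-1$ can all be read off from $f$ and a handful of its relatives.

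First I would dispose of continuity and positivity. On $(-1,0)\cup(0,\infty)$ both functions are continuous as quotients of smooth functions with nonvanishing denominators. At $a=0$, a second-order Taylor expansion gives $f(a) = \tfrac12 a^2 + o(a^2)$, so $\alpha(a)\to\tfrac12$ and $\beta(a)\to\tfrac12$, matching the assigned values; at $a=-1$ one has $f(a)\to1$ and $a^2\to1$, so $\alpha(a)\to1=\alpha(-1)$. The same two limits give $\beta(a)=\alpha(a)/(1+a)\to+\infty$ as $a\to-1^+$, which is the final assertion of the lemma. For positivity, note $f(0)=0$ while $f'(a)=\log(1+a)$ is negative on $(-1,0)$ and positive on $(0,\infty)$; hence $f$ attains a strict global minimum value $0$ at $a=0$ and is strictly positive elsewhere. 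Dividing by $a^2>0$ and by $a^2(1+a)>0$ yields $\alpha(a),\beta(a)>0$ for $a\neq0$, and the assigned endpoint values are positive as well.

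The main work, and the main obstacle, is strict monotonicity, which I establish by a two-step differentiation that collapses to clean expressions. For $\alpha$, the quotient rule gives $\alpha'(a) = \bigl(2a-(a+2)\log(1+a)\bigr)/a^3$; writing $h(a)$ for the numerator, one computes $h(0)=h'(0)=0$ and $h''(a)=-a/(1+a)^2$. Thus $h'$ has a strict maximum $0$ at $a=0$, so $h'<0$ away from $0$, forcing $h>0$ on $(-1,0)$ and $h<0$ on $(0,\infty)$; dividing by $a^3$ shows $\alpha'<0$ throughout. For $\beta$ the analogous computation yields $\beta'(a) = Q(a)/\bigl(a^3(1+a)^2\bigr)$ with $Q(a) = 2a+3a^2-2(1+a)^2\log(1+a)$, and the key simplification is $Q'(a) = -4f(a)$. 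Since $f\ge0$ with equality only at $0$, the function $Q$ is strictly decreasing, so $Q(0)=0$ gives $Q>0$ on $(-1,0)$ and $Q<0$ on $(0,\infty)$, whence $\beta'<0$ throughout.

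Finally, strict decrease on each of $(-1,0)$ and $(0,\infty)$ together with continuity at $a=0$ (and, for $\alpha$, at $a=-1$) upgrades to strict decrease on the full domain. I expect the only delicate point to be the bookkeeping in the two derivative computations: keeping the logarithmic terms organized so that the telescoping to $h''(a)=-a/(1+a)^2$ and to $Q'(a)=-4f(a)$ becomes visible is what makes the sign analysis effortless, and it is precisely that cancellation that must be set up carefully.
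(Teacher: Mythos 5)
Your proof is correct, and for $\alpha$ it is essentially the paper's own argument: the same numerator $2a-(2+a)\log(1+a)$ appears after the quotient rule, and the same second-derivative computation $h''(a)=-a/(1+a)^2$ drives the sign analysis. Two things differ. First, at $a=0$ you use a Taylor expansion plus a gluing argument (strict decrease on $(-1,0)$ and on $(0,\infty)$ together with continuity at $0$), whereas the paper invokes l'H\^{o}pital twice, once to get $\lim_{a\to0}\alpha(a)=\tfrac12$ and once to get $\lim_{a\to0}\alpha'(a)=-\tfrac16$ so that $\alpha'$ extends to a continuous derivative on all of $(-1,\infty)$; your version is lighter and equally rigorous. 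Second, and more substantively, for $\beta$ the paper does no new computation at all: it writes $\beta(a)=\alpha(a)/(1+a)$ and observes that the product of two positive, strictly decreasing functions is positive and strictly decreasing, and that $\beta(a)\to\infty$ as $a\to-1$ because $\alpha(a)\to1$ while $1/(1+a)\to\infty$. Your independent treatment of $\beta$ via $\beta'(a)=Q(a)/\bigl(a^3(1+a)^2\bigr)$ with $Q(a)=2a+3a^2-2(1+a)^2\log(1+a)$ and the identity $Q'(a)=-4f(a)$ checks out (I verified the algebra), and the telescoping is genuinely elegant, but it is work that the factorization through $\alpha$ renders unnecessary. In short: both routes are valid; the paper's handling of $\beta$ is a one-liner, while yours is self-contained and would survive even if one had not first established the full monotonicity statement for $\alpha$.
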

\begin{proof}
We first prove the result on $\alpha$. 
Define $h(a)=(1+a)\log(1+a)-a$ for $a>-1$ and $h(-1)=1$. Note that $h$ is differentiable with
$h'(a)=\log(1+a)$. By the l'H{\^o}pital
rule, we have
\begin{align}
  \lim_{a\to -1}h(a)
  &=1+\lim_{a\to-1}\frac{\log(1+a)}{(1+a)^{-1}}
   =1-\lim_{a\to-1}\frac{(1+a)^{-1}}{(1+a)^{-2}}=1,
\end{align}
which yields that $h$ and $\alpha$ are continuous at $-1$. Similarly,
\begin{align}
  \lim_{a\to0}\alpha(a)
  &=  \lim_{a\to0}\frac{\log(1+a)}{2a}
  =  \lim_{a\to0}\frac{1}{2(1+a)}=\frac{1}{2},
\end{align}
so $\alpha$ is continuous at $0$. As $h$ is zero at zero, $h(a)$ is positive for
$a\neq0$, from which is follows that $\alpha$ is positive. It remains
to show that $\alpha$ is strictly decreasing. For $a\ge-1$ with
$a\notin\{-1,0\}$, we have that $\alpha$ is differentiable with
\begin{align}
  \alpha'(a)
  &=\frac{a^2\log(1+a)-2((1+a)\log(1+a)-a)a}{a^4}\notag\\
  &=\frac{2a^2-a(2+a)\log(1+a)}{a^4}.
\end{align}
By the l'H{\^o}pital rule, we obtain
\begin{align}
  \lim_{a\to0}\alpha'(a)
  &=\lim_{a\to0}\frac{4a-2(1+a)\log(1+a)-a(2+a)(1+a)^{-1}}{4a^3}\notag\\
  &=\lim_{a\to0}\frac{a(2+a)(1+a)^{-2}-2\log(1+a)}{12a^2}\notag\\
  &=-\lim_{a\to0}\frac{2a(2+a)(1+a)^{-3}}{24a}
   =-\frac{1}{12}\lim_{a\to0}\frac{2+a}{(1+a)^3}
   =-\frac{1}{6},
\end{align}
so defining $\alpha'(0)=-\frac{1}{6}$, we obtain that $\alpha'$ is
a continuous mapping on $(-1,\infty)$, and as $\alpha'$ is the
derivative of $\alpha$ for $a\ge-1$ with $a\notin\{-1,0\}$, $\alpha'$
is also the derivative of $\alpha$ for $(1,\infty)$. In order to show
that $\alpha$ is strictly decreasing, it then suffices to show that
that $2a^2-a(2+a)\log(1+a)$ is negative for $a>-1$ with $a\neq0$. Now,
for $a\neq0$, note that
\begin{align}
  \frac{\df{}}{\df{a}}(2a-(2+a)\log(1+a))
  &=2-\log(1+a)-\frac{2+a}{1+a}\quad\textrm{ and }\\
  \frac{\df{}^2}{\df{a}^2}(2a-(2+a)\log(1+a)) &=\frac{1}{(1+a)^2}-\frac{1}{1+a}
  =-\frac{a}{(1+a)^2}.
\end{align}
From this, we conclude that $a\mapsto 2a-(2+a)\log(1+a)$
is positive for $-1<a<0$ and negative for $a>0$. Therefore, $a\mapsto 2a^2-a(2+a)\log(1+a)$ is negative for $a>-1$ with
$a\neq0$. As a consequence, $\alpha$ is strictly decreasing. As
$\beta(a)=\alpha(a)/(1+a)$, the results on $\beta$ follow from those
on $\alpha$.
\end{proof}

\begin{lemma}
\label{lemma:PoissonCompMart}
Let $N$ be a standard Poisson process, let $b$ and $\lambda$ be in $\RR$, and define $f_b(\lambda)=\exp(-\lambda)+\lambda(1+b)-1$. With
$L^b_t=\exp(-\lambda(N_t-(1+b)t)-tf_b(\lambda))$, $L^b$ is a nonnegative
martingale with respect to the filtration induced by $N$.
\end{lemma}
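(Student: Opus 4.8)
The plan is to reduce the claim to the well-known fact that a rate-one Poisson process admits a simple exponential martingale, by first simplifying the exponent defining $L^b$. Writing out $-\lambda(N_t-(1+b)t)-tf_b(\lambda)$ and substituting the definition $f_b(\lambda)=\exp(-\lambda)+\lambda(1+b)-1$, I would observe that the two terms carrying the factor $(1+b)$ cancel, leaving
\[
L^b_t=\exp\left(-\lambda N_t-t(\exp(-\lambda)-1)\right),
\]
an expression that no longer depends on $b$. Thus it suffices to show that this right-hand side defines a nonnegative martingale with respect to the filtration $(\FFF^N_t)$ induced by $N$.

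Nonnegativity is immediate, since $L^b_t$ is the exponential of a real number. For integrability and for the martingale property, I would use that $N$ has stationary and independent increments, so that for $s\le t$ the increment $N_t-N_s$ is Poisson distributed with mean $t-s$ and independent of $\FFF^N_s$. The moment generating function of a Poisson variable then gives $\EE[\exp(-\lambda(N_t-N_s))\mid\FFF^N_s]=\exp((t-s)(\exp(-\lambda)-1))$; in particular, taking $s=0$ yields $\EE[L^b_t]=1<\infty$, so each $L^b_t$ is integrable.

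For the martingale property, fix $s\le t$ and factor $L^b_t=L^b_s\exp(-\lambda(N_t-N_s)-(t-s)(\exp(-\lambda)-1))$. Since $L^b_s$ is $\FFF^N_s$-measurable, I would condition on $\FFF^N_s$ and apply the increment computation above; the deterministic factor $\exp(-(t-s)(\exp(-\lambda)-1))$ is then cancelled exactly by the conditional moment generating function, leaving $\EE[L^b_t\mid\FFF^N_s]=L^b_s$. This establishes the martingale property.

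There is no substantial obstacle here: the entire content lies in the algebraic simplification showing that $b$ drops out of the exponent, after which the statement is the standard exponential martingale of the Poisson process. The only points requiring minor care are the invocation of the independent-increments property and the exact form of the Poisson moment generating function, both of which are routine.
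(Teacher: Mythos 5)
Your proof is correct and takes essentially the same route as the paper's: both arguments rest on the independent, stationary increments of $N$ and the Poisson moment generating function identity $E[\exp(-\lambda(N_t-N_s))\mid\FFF^N_s]=\exp((t-s)(\exp(-\lambda)-1))$, the only difference being that you cancel the $b$-dependent terms in the exponent up front, whereas the paper carries them along and performs the same cancellation inside the conditional expectation computation. Your explicit verification of integrability is a minor addition the paper leaves implicit, but otherwise the two proofs coincide.
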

\begin{proof}
Let $\GGG_t=\sigma(N_s)_{s\le t}$. Fix $0\le s\le t$. As $N_t-N_s$ is
independent of $\GGG_s$ and follows a Poisson distribution with
parameter $t-s$, we obtain
\begin{align}
  E(\exp(-\lambda(N_t-N_s))|\GGG_s)
  &=\exp((t-s)(\exp(-\lambda)-1)),
\end{align}
which implies
\begin{align}
  E(L^b_t|\GGG_s)
  &=E(\exp(-\lambda(N_t-N_s))|\GGG_s)
    \exp(-\lambda N_s)\exp(\lambda(1+b)t-tf_b(\lambda))\notag\\
  &=\exp((t-s)(\exp(-\lambda)-1))\exp(-\lambda N_s)\exp(\lambda(1+b)t-tf_b(\lambda))\notag\\
  &=\exp(-\lambda(N_s-(1+b)s)-sf_b(\lambda))=L^b_s,
\end{align}
proving the lemma.
\end{proof}

\begin{lemma}
\label{lemma:EMUI}
Let $M$ be a local martingale with initial value zero and $\Delta
M\ge-1$. Then $E\EEE(M)_\infty\le 1$, and $\EEE(M)$ is a uniformly
integrable martingale if and only if $E\EEE(M)_\infty=1$.
\end{lemma}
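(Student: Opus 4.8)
The plan is to exploit that $\EEE(M)$ is a nonnegative local martingale with $\EEE(M)_0 = 1$, both recorded in the excerpt, and to route everything through the supermartingale property. First I would fix a localizing sequence $(T_n)$ of stopping times with $T_n \uparrow \infty$ such that each stopped process $\EEE(M)^{T_n}$ is a true martingale. For $0 \le s \le t$ the martingale property gives $E(\EEE(M)_{t \wedge T_n} \mid \FFF_s) = \EEE(M)_{s \wedge T_n}$; since $\EEE(M)$ is c\`adl\`ag and $T_n \uparrow \infty$, we have $\EEE(M)_{t \wedge T_n} \to \EEE(M)_t$ almost surely, and the conditional Fatou lemma then yields $E(\EEE(M)_t \mid \FFF_s) \le \EEE(M)_s$. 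Thus $\EEE(M)$ is a nonnegative supermartingale with $\EEE(M)_0 = 1$, so $t \mapsto E\EEE(M)_t$ is nonincreasing and bounded above by $1$.

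By the supermartingale convergence theorem, $\EEE(M)_t$ converges almost surely to a limit $\EEE(M)_\infty$ as $t \to \infty$. The bound $E\EEE(M)_\infty \le 1$ is then immediate from Fatou's lemma, since $E\EEE(M)_t \le 1$ for every $t$. This settles the first claim. For the equivalence, the forward implication is routine: if $\EEE(M)$ is a uniformly integrable martingale it converges to $\EEE(M)_\infty$ in $L^1$, whence $E\EEE(M)_\infty = \lim_t E\EEE(M)_t = E\EEE(M)_0 = 1$.

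The substance is in the converse, so assume $E\EEE(M)_\infty = 1$. Since $t \mapsto E\EEE(M)_t$ is nonincreasing with value $1$ at $t = 0$, and since Fatou gives $E\EEE(M)_\infty \le \lim_t E\EEE(M)_t \le E\EEE(M)_t$ for each fixed $t$, the hypothesis forces $E\EEE(M)_t = 1$ for every $t$. A nonnegative supermartingale with constant expectation is automatically a martingale: for $s \le t$ the difference $\EEE(M)_s - E(\EEE(M)_t \mid \FFF_s)$ is nonnegative with zero expectation, hence vanishes almost surely.

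It remains to upgrade this martingale to a uniformly integrable one, which I expect to be the main obstacle. Here I would invoke Scheff\'e's lemma: the almost sure convergence $\EEE(M)_t \to \EEE(M)_\infty$ together with the convergence of the $L^1$-norms (for nonnegative variables, $E\EEE(M)_t = 1 = E\EEE(M)_\infty$) gives convergence in $L^1$. Passing to the limit $t \to \infty$ in the martingale identity $\EEE(M)_s = E(\EEE(M)_t \mid \FFF_s)$, using that conditional expectation is an $L^1$-contraction, yields $\EEE(M)_s = E(\EEE(M)_\infty \mid \FFF_s)$; that is, $\EEE(M)$ is closed by its terminal value $\EEE(M)_\infty \in L^1$, and is therefore uniformly integrable. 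The only delicate points are the careful handling of the almost sure limit at infinity and the application of Scheff\'e's lemma, both of which rest squarely on the nonnegativity of $\EEE(M)$ guaranteed by $\Delta M \ge -1$.
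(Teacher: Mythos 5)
Your proof is correct and follows essentially the same route as the paper, which simply cites ``the optional sampling theorem for nonnegative supermartingales'': both arguments rest entirely on the fact that $\EEE(M)$ is a nonnegative local martingale with initial value one, hence a supermartingale. You have merely written out in full the standard facts (conditional Fatou along a localizing sequence, supermartingale convergence, constancy of expectation forcing the martingale property, and Scheff\'e's lemma to close the martingale by $\EEE(M)_\infty$) that the paper leaves to the cited theorem.
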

\begin{proof}
This follows from the the optional sampling theorem for nonnegative
supermartingales.
\end{proof}

In the proof of Theorem \ref{theorem:OptimalNovikov}, note that for a
standard Poisson process $N$, it holds that with $M_t = N_t-t$,
$\langle M\rangle_t = t$, since $[M]_t = N_t$ by Definition VI.37.6 of
\cite{RW2} and since $\langle M\rangle$ is the unique predictable and
locally integrable increasing process making $[M]-\langle M\rangle$ a
local martingale.

\begin{theorem}
\label{theorem:OptimalNovikov}
Fix $a\ge -1$. Let $M$ be a locally square integrable local martingale
with $\Delta
M1_{(\Delta M\neq0)}\ge a$. If $\exp(\frac{1}{2}\langle
M^c\rangle_\infty+\alpha(a)\langle M^d\rangle_\infty)$ is integrable,
then $\EEE(M)$ is a uniformly integrable martingale. Furthermore, for all
$a\ge-1$, the coefficients $\frac{1}{2}$ and $\alpha(a)$ in front of
$\langle M^c\rangle$ and $\langle M^d\rangle$ are optimal in the sense
that the criterion is false if any of the coefficients are reduced.
\end{theorem}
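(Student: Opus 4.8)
The plan is to separate the uniform integrability assertion from the two optimality assertions and to treat them by different tools.

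\emph{Sufficiency.} I would deduce this from the criterion of \cite{LM}, which guarantees that $\EEE(M)$ is a uniformly integrable martingale as soon as the exponential of $\frac{1}{2}\langle M^c\rangle_\infty+\int_0^\infty\!\!\int((1+x)\log(1+x)-x)\,\nu(\mathrm{d}s,\mathrm{d}x)$ is integrable, where $\nu$ denotes the predictable compensator of the jump measure of $M$. The remaining work is analytic. Writing $h(x)=(1+x)\log(1+x)-x$ as in the proof of Lemma \ref{lemma:alphaFun}, the key point is the identity $h(x)/x^2=\alpha(x)$, so that the monotonicity of $\alpha$ established in Lemma \ref{lemma:alphaFun} yields $h(x)\le\alpha(a)x^2$ for every $x\ge a$. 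Since every nonzero jump satisfies $\Delta M_s\ge a$ by hypothesis and since $\langle M^d\rangle_t=\int_0^t\!\int x^2\,\nu(\mathrm{d}s,\mathrm{d}x)$, integrating this pointwise bound against $\nu$ shows that the exponent above is dominated by $\frac{1}{2}\langle M^c\rangle_\infty+\alpha(a)\langle M^d\rangle_\infty$, whence the hypothesis gives the conclusion; the boundary values $a=0$ and $a=-1$ are covered by the continuity of $\alpha$ from Lemma \ref{lemma:alphaFun}.

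\emph{Optimality of $\frac{1}{2}$.} Here the jump constraint is vacuous and I would use a continuous example. Let $W$ be a Brownian motion, fix $\nu>0$ and $\kappa<-\nu$, and let $\tau$ be the first time $W_t+\nu t$ reaches a level $b>0$. Setting $M=\kappa W^\tau$ gives $M^d=0$ and $\langle M^c\rangle_\infty=\kappa^2\tau$. Optional stopping applied to the exponential martingales of $W$ yields $E\exp(c\kappa^2\tau)<\infty$ exactly when $c\kappa^2<\tfrac12\nu^2$, together with $E\EEE(M)_\infty=\exp(2b(\kappa+\nu))<1$, so that $\EEE(M)$ is not uniformly integrable by Lemma \ref{lemma:EMUI}. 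As the critical exponent $\nu^2/(2\kappa^2)$ increases to $\frac12$ when $\kappa\downarrow-\nu$, for each $c<\frac12$ one may select $\kappa$ so that $E\exp(c\langle M^c\rangle_\infty)<\infty$ while the conclusion fails; hence the coefficient $\frac12$ cannot be reduced.

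\emph{Optimality of $\alpha(a)$.} Fix $a\in(-1,\infty)$ with $a\ne0$ and build a purely discontinuous example from a standard Poisson process $N$. With $M_t=a(N_t-t)$ every jump equals $a$, so the constraint holds with equality, $M^c=0$, $\langle M^d\rangle_t=a^2t$ by the remark preceding the theorem, and $\EEE(M)_t=\exp(-at)(1+a)^{N_t}$. I would stop $M$ at the first passage time $\tau$ of $N_t-(1+b)t$ across a fixed level, where $b$ is a free slope parameter. Lemma \ref{lemma:PoissonCompMart} is precisely the device that evaluates the Laplace transform of $\tau$: optional stopping of $L^b$ at $\tau$ expresses $E\exp(-f_b(\lambda)\tau)$ as an exponential in the level, and optimizing over $\lambda$, the optimum being $\lambda=-\log(1+b)$, shows that $E\exp(s\tau)<\infty$ if and only if $s<h(b)$. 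Choosing $b$ on the appropriate side of $a$ (below $a$ with a downward passage when $a>0$, above $a$ with an upward passage when $-1<a<0$) and close to $a$ keeps the critical level $h(b)$ above $\alpha'a^2$ for any prescribed $\alpha'<\alpha(a)=h(a)/a^2$, so that $E\exp(\alpha'\langle M^d\rangle_\infty)<\infty$; a second application of Lemma \ref{lemma:PoissonCompMart} to $E\EEE(M)_\tau$ then identifies the correct root $\lambda$ of $-f_b(\lambda)=-a+(1+b)\log(1+a)$ and shows $E\EEE(M)_\tau<1$, so that $\EEE(M)$ is not uniformly integrable. The case $a=-1$ is even simpler, since a jump of size $-1$ absorbs $\EEE(M)$ at zero and one may stop at the first jump time, whose law is exponential with critical exponent $1=\alpha(-1)$; the case $a=0$ follows by letting the jump size tend to $0$ and using the continuity of $\alpha$.

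The step I expect to be the main obstacle is the discontinuous optimality argument. One must check that $\tau$ is almost surely finite, that $L^b$ is uniformly integrable up to $\tau$ so that optional stopping is justified, and, most delicately, that among the two roots of $-f_b(\lambda)=-a+(1+b)\log(1+a)$ the probabilistically admissible one lies on the side of $-\log(1+a)$ that produces a strict loss of mass $E\EEE(M)_\tau<1$, while $b$ is simultaneously kept close enough to $a$ that $h(b)>\alpha'a^2$. The identity $h(b)/b^2=\alpha(b)$ and the monotonicity from Lemma \ref{lemma:alphaFun} are exactly what reconcile these two competing demands, and the continuity of $\alpha$ at $0$ and $-1$ is what lets the boundary cases be folded into the same scheme.
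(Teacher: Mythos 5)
Your proposal follows essentially the same route as the paper: sufficiency via Theorem III.1 of \cite{LM} together with the monotonicity of $\alpha$ from Lemma \ref{lemma:alphaFun}, and optimality of $\alpha(a)$ via processes of the form $M_t=a(N_t^T-t\wedge T)$ with $T$ a linear-boundary passage time for the Poisson process, analyzed through Lemma \ref{lemma:PoissonCompMart} and optional stopping. Within this common skeleton you make three deviations worth recording. First, for optimality of $\tfrac12$ you construct an explicit stopped Brownian example where the paper simply cites \cite{AAN}; your computation ($E\EEE(M)_\infty=\exp(2b(\kappa+\nu))<1$ with critical exponent $\nu^2/(2\kappa^2)\uparrow\tfrac12$ as $\kappa\uparrow-\nu$) is correct. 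Second, your treatment of $a=-1$ is genuinely simpler than the paper's: stopping $-(N_t-t)$ at the first jump time $U_1$ gives $\EEE(M)_\infty=0$ while $\langle M\rangle_\infty=U_1$ is standard exponential, so $E\exp((1-\eps)\alpha(-1)\langle M\rangle_\infty)=1/\eps<\infty$; the paper instead reduces this case to $-1<a<0$. Third, where the paper picks an explicit $\lambda(b)$ (resp.\ $\lambda(b,c)$) and verifies an inequality, you solve $-f_b(\lambda)=(1+b)\log(1+a)-a$ for a root on the correct side of $-\log(1+a)$; this is equivalent, and indeed $\lambda=-\log(1+a)$ is always one of the two roots, the other lying beyond $-\log(1+b)$, which is the one you need. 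The one place your sketch underestimates the work is the case $-1<a<0$: the upward passage overshoots the level $c$ by at most one, so optional stopping only yields $E\exp(-T_{bc}f_b(\lambda))\le\exp((c+1)\lambda)$ for $\lambda\ge0$, and the loss-of-mass requirement tightens from $\lambda<-\log(1+a)$ to $\lambda<\tfrac{c}{c+1}(-\log(1+a))$; one must fix $b$ close to $a$ and then take $c$ large so that the admissible root (which lies below $-\log(1+b)$) fits under this bound --- this is exactly where the paper expends its effort with the function $\rho_b(c)$. You flag this as the main obstacle, and it is resolvable along the lines you indicate, so the plan stands. A minor quibble: your claim that $E\exp(s\tau)<\infty$ \emph{if and only if} $s<h(b)$ overstates what the argument gives (optional stopping yields finiteness for all $s\le h(b)$, including the endpoint, and the converse is neither proved nor needed).
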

\begin{proof}
\textit{Sufficiency.} With $h(x)=(1+x)\log(1+x)-x$, we find by Lemma \ref{lemma:alphaFun} that
for $-1\le a\le x$, $\alpha(a)\ge \alpha(x)$, which implies $h(x)\le
\alpha(a)x^2$. Letting $a\ge-1$ and letting $M$ be a locally square
integrable local martingale with initial value zero, $\Delta M1_{(\Delta M\neq0)}\ge a$ and $\exp(\frac{1}{2}\langle
M^c\rangle_\infty+\alpha(a)\langle M^d\rangle_\infty)$ integrable, we
obtain for all $t\ge0$ the inequality $(1+\Delta M_t)\log(1+\Delta
M_t)+\Delta M_t\le \alpha(a) (\Delta M_t)^2$, and so Theorem III.1 of
\cite{LM} shows that $\EEE(M)$ is a
uniformly integrable martingale. Thus, the condition is sufficient. 

As regards optimality of the coefficients, optimality of the
coefficient $\frac{1}{2}$ in front of $\langle M^c\rangle$ is
well-known, see \cite{AAN}. It therefore suffices to to prove optimality of the
coefficient $\alpha(a)$ in front of $\langle M^d\rangle$. To do so, we
need to show the following: That
for each $\eps>0$, there exists a locally square
integrable local martingale with initial value zero and $\Delta M\ge a$ such that
$\exp(\frac{1}{2}\langle M^c\rangle_\infty+(1-\eps)\alpha(a)\langle M^d\rangle_\infty)$ is
integrable, while $\EEE(M)$ is not a uniformly integrable martingale.

\textit{The case $a>0$.} Let $\eps,b>0$, put $T_b = \inf\{t\ge0\mid N_t -
(1+b)t=-1\}$ and define $M_t = a(N^{T_b}_t-t\land T_b)$. We claim that
we may choose $b>0$ such that $M$ satisfies the requirements stated
above. It holds that $M$ is a locally square
integrable local martingale with initial value zero and $\Delta M1_{(\Delta M\neq0)}\ge a$, and $M$ is purely
discontinuous by Definition 7.21 of \cite{HWY} since it is of locally
integrable variation. In particular, $M^c=0$, so it suffices to show that $\exp((1-\eps)\alpha(a)\langle M\rangle_\infty)$ is
integrable while $\EEE(M)$ is not a uniformly
integrable martingale. To show this, we first argue that $T_b$ is almost surely finite. To this end, note that since
$t\mapsto N_t-(1+b)t$ only has nonnegative jumps, has initial value
zero and decreases between jumps, the process hits $-1$ if and only if it is less than or equal to
$-1$ immediately before one of its jumps. Therefore, with $U_n$ denoting the $n$'th jump time of
$N$, we have
\begin{align}
    P(T_b=\infty)
  &=P(\cap_{n=1}^\infty (N_{U_n-}-(1+b)U_n>-1))\notag\\
  &=P(\cap_{n=1}^\infty (n>U_n(1+b)))\notag\\
  &\le P(\limsup_{n\to\infty}U_n/n\le (1+b)^{-1}),
\end{align}
which is zero, as $\lim_{n\to\infty}U_n/n=1$ almost surely by the law
of large numbers, and $(1+b)^{-1}<1$ as $b>0$. Thus, $T_b$ is almost surely
finite, and by the path properties of $N$, $N_{T_b} = (1+b)T_b-1$
almost surely. We then obtain
\begin{align}
  \EEE(M)_\infty
  &=\exp(a(N_{T_b}-T_b)+N_{T_b}(\log(1+a)-a))\notag\\
  &=\exp(N_{T_b}\log (1+a)-aT_b)\notag\\
  &=\exp(((1+b)T_b-1)\log (1+a)-aT_b)\notag\\
  &=(1+a)^{-1}\exp(T_b((1+b)\log(1+a)-a)).
\end{align}
Recalling Lemma \ref{lemma:EMUI}, we wish to choose $b>0$ such that $E\EEE(M)_\infty<1$ and $E\exp((1-\eps)\alpha\langle
M\rangle_\infty)<\infty$ holds simultaneously. Note that $\langle
M\rangle_\infty = a^2T_b$. Therefore, we need to select a positive $b$
with the properties that
\begin{align}
    E\exp(T_b((1+b)\log(1+a)-a))&< 1+a\textrm{ and }\label{eqn:PrimaryFirstReq}\\
    E\exp(T_ba^2(1-\eps)\alpha(a))&<\infty.\label{eqn:PrimarySecondReq}
\end{align}
Consider some $b>0$ and let $f_b$ be as in Lemma \ref{lemma:PoissonCompMart}. By that same lemma, the process $L^b$ defined by
putting $L^b_t=\exp(-\lambda(N_t-(1+b)t)-tf_b(\lambda))$ is a
martingale. In particular, it is a nonnegative supermartingale with
initial value one, so Theorem II.77.5 of \cite{RW1} yields $1\ge EL^b_{T_b}
     =E\exp(\lambda-T_bf_b(\lambda))$,
and so $E\exp(-T_bf_b(\lambda))\le \exp(-\lambda)$. Note that $f_b'(\lambda)
  =-\exp(-\lambda)+1+b$, such that $f_b$ takes its minimum at $-\log(1+b)$. Therefore, $-f_b$ takes
its maximum at $-\log(1+b)$, and we find that the maximum is
$h(b)$. In particular, $E\exp(T_bh(b))$ is
finite. Next, define a function $\lambda$ by putting $\lambda(b) = -\log((1+a)\frac{b}{a})$, we then have $E\exp(-T_bf_b(\lambda(b)))\le
(1+a)\frac{b}{a}$, which is strictly less than $1+a$ whenever $b<a$. Thus, if we can choose $b\in(0,a)$ such that
\begin{align}
  (1+b)\log(1+a)-a&\le -f_b(\lambda(b))\textrm{ and }\label{eqn:FirstReq}\\
  a^2(1-\eps)\alpha(a)&\le h(b),\label{eqn:SecondReq}
\end{align}
we will have achieved our end, since (\ref{eqn:FirstReq}) implies
(\ref{eqn:PrimaryFirstReq}) and (\ref{eqn:SecondReq}) implies
(\ref{eqn:PrimarySecondReq}). To this end, note that
\begin{align}
  -f_b(\lambda(b))
  &=-\exp(\log((1+a)\tfrac{b}{a}))+\log((1+a)\tfrac{b}{a})(1+b)+1\notag\\
  &=1-(1+a)\tfrac{b}{a}+(1+b)\log((1+a)\tfrac{b}{a})\notag\\
  &=1-(1+a)\tfrac{b}{a}+(1+b)\log(1+a)+(1+b)\log \tfrac{b}{a},
\end{align}
such that, by rearrangement, (\ref{eqn:FirstReq}) is equivalent to
\begin{align}
  0\le 1+a-(1+a)\tfrac{b}{a}+(1+b)\log \tfrac{b}{a},
\end{align}
and therefore, as $1-\frac{b}{a}>0$ for $0<b<a$, equivalent to
\begin{align}
  \label{eqn:FirstReqReduction}
  (1+b)\frac{\log \tfrac{b}{a}}{\tfrac{b}{a}-1} &\le 1+a,
\end{align}
which, as $\log x\le x-1$ for $x>0$, is satisfied for all
$0<b<a$. Thus, it suffices to choose $b\in(0,a)$ such that
(\ref{eqn:SecondReq}) is satisfied, corresponding to choosing $b\in(0,a)$
such that $(1-\eps)h(a)\le h(b)$. As $h$ is positive and continuous on $(0,\infty)$, this is
possible by choosing $b$ close enough to $a$. With this choice of $b$, we now obtain $M$ yielding an
example proving that the coefficient $\alpha(a)$ is optimal. This
concludes the proof of optimality in the case $a>0$.

\textit{The case $a=0$.} Let $\eps>0$. To prove optimality, we wish to
identify a locally square integrable local martingale
$M$ with initial value zero and $\Delta M1_{(\Delta M\neq0)}\ge0$ such that $\exp((1-\eps)\alpha(0)\langle M\rangle_\infty)$ is integrable while
$\EEE(M)$ is not a uniformly integrable martingale. Recalling that
$\alpha$ is positive and continuous, pick $a>0$ so
close to zero that $(1-\eps)\alpha(0)\le
(1-\frac{1}{2}\eps)\alpha(a)$. By what was already shown, there
exists a locally square integrable local
martingale $M$ with initial value zero and $\Delta
M1_{(\Delta M\neq0)}\ge a$ such that $\exp((1-\frac{1}{2}\eps)\alpha(a)\langle
M\rangle_\infty)$ is integrable while $\EEE(M)$ is not a uniformly
integrable martingale. As $\exp((1-\eps)\alpha(0)\langle
M\rangle_\infty)$ is integrable in this case, this shows that $\alpha(0)$ is optimal.

\textit{The case $-1<a<0$.} Let $\eps>0$, let $-1<b<0$, let $c>0$ and
define a stopping time $T_{bc}$ by putting $T_{bc} = \inf\{t\ge0\mid N_t -
(1+b)t\ge c\}$. Also define $M$ by $M_t = a(N^{T_{bc}}_t-t\land T_{bc})$. We claim that
we can choose $b\in(-1,0)$ and $c>0$ such that $M$ satisfies the
requirements to show optimality. Similarly to the case $a>0$, $M$ is a
purely discontinuous locally square
integrable local martingale with initial value zero and $\Delta M1_{(\Delta M\neq0)}\ge a$, so it suffices to
show that $\exp((1-\eps)\alpha(a)\langle M\rangle_\infty)$ is
integrable while $\EEE(M)$ is not a uniformly integrable
martingale. We first investigate some properties
of $T_{bc}$. As $t\mapsto N_t-(1+b)t$ only has nonnegative jumps, has initial value
zero and decreases between jumps, the process advances beyond $c$ at
some point if and only it advances beyond $c$ at one of its jump
times. Therefore, with $U_n$ denoting the $n$'th jump time of $N$,
\begin{align}
    P(T_{bc}=\infty)
  &=P(\cap_{n=1}^\infty (N_{U_n}-(1+b)U_n<c))\notag\\
  &=P(\cap_{n=1}^\infty (n-c<U_n(1+b)))\notag\\
  &\le P(\liminf_{n\to\infty}U_n/n\ge (1+b)^{-1}),
\end{align}
which is zero, as $U_n/n$ tends to one almost surely and $(1+b)^{-1}>1$. Thus, $T_{bc}$ is almost surely
finite. Furthermore, by the path properties of $N$, $N_{T_{bc}}
\ge (1+b)T_{bc}+ c$ and $N_{T_{bc}}\le (1+b)T_{bc}+c+1$ almost surely. Since $\log(1+a)\le
0$, we in particular obtain $N_{T_{bc}}\log(1+a)\le ((1+b)T_{bc}+c)\log(1+a)$ almost
surely. From this, we conclude that
\begin{align}
  \EEE(M)_\infty
  &=\exp(a(N_{T_{bc}}-T_{bc})+N_{T_{bc}}(\log(1+a)-a))\notag\\
  &=\exp(N_{T_{bc}}\log (1+a)-aT_{bc})\notag\\
  &\le\exp(((1+b)T_{bc}+c)\log (1+a)-aT_{bc})\notag\\
  &=(1+a)^c\exp(T_{bc}((1+b)\log(1+a)-a)).
\end{align}
We wish to choose $-1<b<0$ and $c>0$ such that $E\exp((1-\eps)\alpha(a)\langle
M\rangle_\infty)<\infty$ and $E\EEE(M)_\infty<1$ holds
simultaneously. As $\langle M\rangle_\infty = a^2T_{bc}$, this is
equivalent to choosing $-1<b<0$ and $c>0$ such that
\begin{align}
    E\exp(T_{bc}((1+b)\log(1+a)-a))&< (1+a)^{-c}\textrm{ and }\label{eqn:PrimaryFirstReqMinus}\\
    E\exp(T_{bc}a^2(1-\eps)\alpha(a))&<\infty.\label{eqn:PrimarySecondReqMinus}
\end{align}
Let $f_b$ and $L^b$ be as in Lemma \ref{lemma:PoissonCompMart}. The
process $L^b$ is then a
nonnegative supermartingale. As $N_{T_{bc}}\le (1+b)T_{bc}+c+1$, the
optional stopping theorem allows us to conclude that for $\lambda\ge0$,
\begin{align}
  1&\ge EL^b_{T_{bc}}
   =E\exp(-\lambda(N_{T_{bc}}-(1+b)T_{bc})-T_{bc}f_b(\lambda))\notag\\
  &\ge E\exp(-(c+1)\lambda-T_{bc}f_b(\lambda)),
\end{align}
so that $E\exp(-T_{bc}f_b(\lambda))\le \exp((c+1)\lambda)$. As in the case $a>0$, $-f_b$ takes
its maximum at $-\log(1+b)$, and the maximum is $h(b)$, leading
us to conclude that $E\exp(T_{bc}h(b))$ is finite. Put
$\lambda(b,c)=(c+1)^{-1}\log((1+a)^{-c}\frac{b}{a})$. For all $b\in(a,0)$,
 $\frac{b}{a}<1$, leading to $E\exp(-T_{bc}f_b(\lambda(b,c)))\le
(1+a)^{-c}\frac{b}{a}<(1+a)^{-c}$. Therefore, if we can choose
$b\in(a,0)$ and $c>0$ such that
\begin{align}
  (1+b)\log(1+a)-a&\le -f_b(\lambda(b,c)) \textrm{ and }\label{eqn:FirstReqMinus}\\
  a^2(1-\eps)\alpha(a)&\le h(b),\label{eqn:SecondReqMinus}
\end{align}
we will have obtained existence of a local maringale yielding the
desired optimality of $\alpha(a)$. We first note that $a^2(1-\eps)\alpha(a)\le h(b)$
is equivalent to $(1-\eps)h(a)\le h(b)$. As $h$ is continuous and
positive on $(-1,0)$, we find that (\ref{eqn:SecondReqMinus}) is satisfied
for $a<b<0$ with $b$ close enough to $a$. Next, we turn our
attention to (\ref{eqn:FirstReqMinus}). We have
\begin{align}
  -f_b(\lambda(b,c))
  &=-\exp\left(-\frac{1}{c+1}\log\left((1+a)^{-c}\frac{b}{a}\right)\right)-\frac{1+b}{c+1}\log\left((1+a)^{-c}\frac{b}{a}\right)+1\notag\\
  &=1-(1+a)^{\frac{c}{c+1}}\left(\frac{b}{a}\right)^{-\frac{1}{(c+1)}}-\frac{1+b}{c+1}\log\left((1+a)^{-c}\frac{b}{a}\right)\notag\\
  &=1-(1+a)^{\frac{c}{c+1}}\left(\frac{a}{b}\right)^{\frac{1}{c+1}}+\frac{c(1+b)}{c+1}\log(1+a)+\frac{1+b}{c+1}\log\frac{a}{b},
\end{align}
such that (\ref{eqn:FirstReqMinus}) is equivalent to
\begin{align}
  \label{eqn:FirstReqReductionMinus}
  0&\le
  1+a-(1+a)^{\frac{c}{c+1}}\left(\frac{a}{b}\right)^{\frac{1}{c+1}}+\frac{1+b}{c+1}\left(\log\frac{a}{b}-\log(1+a)\right).
\end{align}
Fixing $a<b<0$, we wish to argue that for $b$ close enough to $a$, 
(\ref{eqn:FirstReqReductionMinus}) holds for $c$ large enough. To this
end, let $\rho_b(c)$ denote the right-hand side of
(\ref{eqn:FirstReqReductionMinus}). Then
$\lim_{c\to\infty}\rho_b(c)=0$. We also note that
$\frac{\df{}}{\df{c}}\frac{1}{c+1}=-\frac{1}{(c+1)^2}$ and
$\frac{\df{}}{\df{c}}\frac{c}{c+1}=\frac{1}{(c+1)^2}$, yielding
\begin{align}
  &\frac{\df{}}{\df{c}}(1+a)^{\frac{c}{c+1}}\left(\frac{a}{b}\right)^{\frac{1}{c+1}}\notag\\
  &=\frac{\df{}}{\df{c}}\exp\left(\frac{c}{c+1}\log(1+a)+\frac{1}{c+1}\log\frac{a}{b}\right)\notag\\
  &=\left(\frac{\log(1+a)}{(c+1)^2}-\frac{\log\frac{a}{b}}{(c+1)^2}\right)\exp\left(\frac{c}{c+1}\log(1+a)+\frac{1}{c+1}\log\frac{a}{b}\right)\notag\\
  &=\frac{\log(1+a)-\log\frac{a}{b}}{(c+1)^2}\exp\left(\frac{c}{c+1}\log(1+a)+\frac{1}{c+1}\log\frac{a}{b}\right)
\end{align}
and
\begin{align}
  \frac{\df{}}{\df{c}}\frac{1+b}{c+1}\left(\log\frac{a}{b}-\log(1+a)\right)
  &=-\frac{1+b}{(c+1)^2}\left(\log\frac{a}{b}-\log(1+a)\right)\notag\\
  &=(1+b)\frac{\log(1+a)-\log\frac{a}{b}}{(c+1)^2},
\end{align}
which leads to
\begin{align}
  \rho_b'(c)
  &=\frac{\log(1+a)-\log\frac{a}{b}}{(c+1)^2}\left(1+b-\exp\left(\frac{c}{c+1}\log(1+a)+\frac{1}{c+1}\log\frac{a}{b}\right)\right).
\end{align}
Now note that for $a<b$, we obtain
\begin{align}
  \lim_{c\to\infty}1+b-\exp\left(\frac{c}{c+1}\log(1+a)+\frac{1}{c+1}\log\frac{a}{b}\right)
  &=1+b-(1+a)>0,
\end{align}
and for $b$ close enough to $a$, $\log(1+a)-\log\frac{a}{b}<0$,
since $a<0$. Therefore, for all $c$ large enough, $\rho_b'(c)<0$. Consider such
a $c$, we then obtain
\begin{align}
  \rho_b(c)&=\lim_{y\to\infty}\rho_b(c)-\rho_b(y)
       =-\lim_{y\to\infty}\int_c^y \rho_b'(z)\df{z}>0.
\end{align}
Thus, we conclude that for $b$ close enough to $a$, it holds that $\rho_b(c)>0$ for $c$ large enough.

We now collect our conclusions in order to obtain $b\in(a,0)$ and $c>0$
satisfying (\ref{eqn:FirstReqMinus}) and
(\ref{eqn:SecondReqMinus}). First choose $b\in(a,0)$ so close to
$a$ that $(1-\eps)h(a)\le h(b)$ and
$\log(1+a)-\log\frac{a}{b}<0$. Pick $c$ so large that $\rho_b(c)>0$. By our deliberations, (\ref{eqn:FirstReqMinus}) and
(\ref{eqn:SecondReqMinus}) then both hold, demonstrating the existence of
a locally square integrable local martingale $M$ with initial value
zero and $\Delta M1_{(\Delta M\neq0)}\ge a$ such that $\exp((1-\eps)\alpha(a)\langle M\rangle_\infty)$ is
integrable while $\EEE(M)$ is not a uniformly integrable
martingale.

\textit{The case $a=-1$.} Let $\eps>0$. We wish to identify a purely
discontinous locally square integrable local martingale $M$ with
$\Delta M1_{(\Delta M\neq0)}\ge-1$ such that integrability of
$\exp((1-\eps)\alpha(-1)\langle M\rangle_\infty)$ holds while
$\EEE(M)$ is not a uniformly integrable martingale. We proceed as in
the case $a=0$. By positivity and continuity of $\alpha$, take $a>0$ so
close to $-1$ that $(1-\eps)\alpha(-1)\le
(1-\frac{1}{2}\eps)\alpha(a)$. By what was shown in the previous case, there
exists a purely discontinuous locally square integrable local
martingale $M$ with initial value zero and $\Delta
M1_{(\Delta M\neq0)}\ge a$ such that $\exp((1-\frac{1}{2}\eps)\alpha(a)\langle
M\rangle_\infty)$ is integrable while $\EEE(M)$ is not a uniformly
integrable martingale. As $\exp((1-\eps)\alpha(-1)\langle
M\rangle_\infty)$ then also is integrable, this shows that $\alpha(-1)$ is optimal.
\end{proof}

\begin{theorem}
\label{theorem:OptimalNovikov2}
Fix $a>-1$. Let $M$ be a local martingale with $\Delta M1_{(\Delta M\neq0)}\ge a$. If $\exp(\frac{1}{2}[M^c]_\infty+\beta(a)[M^d]_\infty)$ is integrable,
then $\EEE(M)$ is a uniformly integrable martingale. Furthermore, for all
$a>-1$, the coefficients $\frac{1}{2}$ and $\beta(a)$ in front of
$[M^c]$ and $[M^d]$ are optimal in the sense
that the criterion is false if any of the coefficients are
reduced.

Furthermore, there exists no $\beta(-1)$ such that for $M$ with $\Delta M1_{(\Delta
  M\neq0)}\ge-1$, integrability of $\exp(\frac{1}{2}[M^c]_\infty+\beta(-1)[M^d]_\infty)$ suffices to
ensure that $\EEE(M)$ is a uniformly integrable martingale.
\end{theorem}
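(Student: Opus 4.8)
The plan is to follow the proof of Theorem \ref{theorem:OptimalNovikov} step for step, with the function $x\mapsto(1+x)\log(1+x)-x$ and the predictable quadratic variation replaced by $g(x)=\log(1+x)-\frac{x}{1+x}$ and the ordinary quadratic variation, and then to deduce the non-existence of $\beta(-1)$ from the blow-up of $\beta$ at $-1$.

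\textit{Sufficiency.} The algebraic input I need is the pointwise inequality $g(x)\le\beta(a)x^2$ for all $x\ge a$. This is immediate from Lemma \ref{lemma:alphaFun}: since $g(x)/x^2=\beta(x)$ for $x\neq0$ and $\beta$ is decreasing, $x\ge a$ gives $\beta(x)\le\beta(a)$ and hence $g(x)\le\beta(a)x^2$, the case $x=0$ being trivial. Because $a>-1$, every jump satisfies $1+\Delta M_s\ge1+a>0$, so $g(\Delta M_s)$ is well defined and summation over the jumps yields $\sum_{0<s}g(\Delta M_s)\le\beta(a)[M^d]_\infty$. I would then invoke the quadratic-variation Novikov-type criterion of \cite{LM}, the optional analogue of the result used in Theorem \ref{theorem:OptimalNovikov}, namely that $\EEE(M)$ is a uniformly integrable martingale whenever $\exp(\tfrac12[M^c]_\infty+\sum_{0<s}g(\Delta M_s))$ is integrable. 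Concretely this criterion rests on the factorization $\EEE(M)=\EEE(pM)^{1/p}\exp(\tfrac{p-1}{2}[M^c])\prod_{0<s}(1+\Delta M_s)(1+p\Delta M_s)^{-1/p}$ combined with H\"older's inequality; here $p>1$ must be chosen so that $1+p\Delta M_s>0$, which is possible exactly because $a>-1$ (for $-1<a<0$ one needs $p<-1/a$), and as $p\downarrow1$ the continuous coefficient $\tfrac{p}{2}$ tends to $\tfrac12$ while the jump exponent $\psi_p(x):=q\log(1+x)-\tfrac{q}{p}\log(1+px)$ tends to $g(x)$. I expect the sharpness of the constants under this limit to be the main obstacle: a single $p>1$ only produces the weaker criterion with $\tfrac{p}{2}$ in front of $[M^c]$, so recovering the exact $\tfrac12$ requires the classical limiting argument of the continuous Novikov criterion (or the sharp form of the \cite{LM} estimate). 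Note that no local square integrability is needed, matching the weaker hypothesis of this theorem.

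\textit{Optimality for $a>-1$.} Optimality of the coefficient $\tfrac12$ in front of $[M^c]$ is the classical assertion of \cite{AAN}, so only $\beta(a)$ need be treated, and for this I would reuse verbatim the purely discontinuous martingales $M_t=a(N^{T_b}_t-t\wedge T_b)$ for $a>0$ and $M_t=a(N^{T_{bc}}_t-t\wedge T_{bc})$ for $-1<a<0$ built in Theorem \ref{theorem:OptimalNovikov}. For these $M^c=0$ and every jump equals $a$, so $[M^d]_\infty=a^2N_{T_b}$ (resp.\ $a^2N_{T_{bc}}$) in place of $\langle M^d\rangle_\infty=a^2T_b$; the computation of $\EEE(M)_\infty$ and the verification that $E\EEE(M)_\infty<1$, hence that $\EEE(M)$ fails to be uniformly integrable by Lemma \ref{lemma:EMUI}, are unchanged. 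Only the integrability requirement is altered: using $N_{T_b}=(1+b)T_b-1$ and the finiteness of $E\exp(h(b)T_b)$ shown in Theorem \ref{theorem:OptimalNovikov}, integrability of $\exp((1-\eps)\beta(a)[M^d]_\infty)$ reduces to a bound of the form $(1-\eps)\beta(a)a^2(1+b)\le h(b)$, that is $(1-\eps)\tfrac{1+b}{1+a}h(a)\le h(b)$, which holds for $b$ close enough to $a$ because the left side then tends to $(1-\eps)h(a)<h(a)=\lim_{b\to a}h(b)$; the same reduction applies in the case $-1<a<0$ with $N_{T_{bc}}\le(1+b)T_{bc}+c+1$. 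Thus the same construction, with $b$ chosen suitably close to $a$ (and $c$ large, as before), works here, and the boundary case $a=0$ is handled by the continuity-in-$a$ device of Theorem \ref{theorem:OptimalNovikov}, choosing a nearby admissible parameter and using positivity and continuity of $\beta$.

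\textit{Non-existence of $\beta(-1)$.} This is now a corollary of the optimality just proved together with the fact, from Lemma \ref{lemma:alphaFun}, that $\beta(a)\to\infty$ as $a\to-1$. Given any finite candidate $\beta(-1)$, I would pick $a\in(-1,0)$ with $\beta(a)>2\beta(-1)$ and apply the case $-1<a<0$ of the optimality argument with $\eps=\tfrac12$: this produces a local martingale $M$ with $\Delta M1_{(\Delta M\neq0)}\ge a\ge-1$ and $M^c=0$ for which $\EEE(M)$ is not uniformly integrable while $\exp(\tfrac12\beta(a)[M^d]_\infty)$ is integrable. Since $\tfrac12\beta(a)>\beta(-1)$ and $[M^d]_\infty\ge0$, integrability of $\exp(\tfrac12[M^c]_\infty+\beta(-1)[M^d]_\infty)=\exp(\beta(-1)[M^d]_\infty)$ follows, so no constant $\beta(-1)$ can suffice.
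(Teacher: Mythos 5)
Your proposal is correct and follows essentially the same route as the paper: sufficiency via the optional (quadratic-variation) criterion of \cite{LM} together with the inequality $g(x)\le\beta(a)x^2$ for $x\ge a$ from the monotonicity of $\beta$, optimality by reusing the stopped compensated Poisson martingales from Theorem \ref{theorem:OptimalNovikov} with $[M^d]_\infty=a^2N_{T}$ and the reduction to $(1-\eps)g(a)\le g(b)$ for $b$ near $a$ (plus $c$ large in the case $-1<a<0$), and non-existence of $\beta(-1)$ from the blow-up of $\beta$ at $-1$. The only differences are cosmetic, e.g.\ your explicit-constant version of the $\beta(-1)$ argument and your sketch of how the cited \cite{LM} criterion is itself proved.
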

\begin{proof}
\textit{Sufficiency.} We proceed in a manner closely related to the
proof of Theorem \ref{theorem:OptimalNovikov}. Defining $g$ by putting
$g(x)=\log(1+x)-x/(1+x)$, we find by Lemma \ref{lemma:alphaFun} that
for $-1<a\le x$, $\beta(a)\ge \beta(x)$, yielding $g(x)\le
\beta(a)x^2$. Letting $a>-1$ and letting $M$ be a locally square
integrable local martingale with initial value zero, $\Delta M1_{(\Delta M\neq0)}\ge a$ and $\exp(\frac{1}{2}\langle
M^c\rangle_\infty+\beta(a)\langle M^d\rangle_\infty)$ integrable, we
obtain for all $t\ge0$ that $\log(1+\Delta M_t)+\Delta
M_t/(1+\Delta M_t)\le \beta(a) (\Delta M_t)^2$, and so Theorem III.7
of \cite{LM} shows that $\EEE(M)$ is a uniformly integrable
martingale. Thus, the condition is sufficient. 

As in Theorem \ref{theorem:OptimalNovikov}, optimality of the $\frac{1}{2}$ in front of $[M^c]$ follows from \cite{AAN},
so it suffices to consider the coefficient $\beta(a)$ in front of
$[M^d]$. Thus, for $a>-1$, we need to prove that
for each $\eps>0$, there exists a locally square
integrable local martingale with initial value zero and $\Delta
M1_{(\Delta M\neq0)}\ge a$ such that
$\exp(\frac{1}{2}[M^c]_\infty+(1-\eps)\beta(a)[M^d]_\infty)$ is
integrable, while $\EEE(M)$ is not a uniformly integrable martingale.

\textit{The case $a>0$.} Let $\eps,b>0$, put $T_b = \inf\{t\ge0\mid N_t -
(1+b)t=-1\}$ and define $M_t = a(N^{T_b}_t-t\land T_b)$. Noting that
$[M]_\infty = a^2N_{T_b}$, we may argue as in the proof of Theorem
\ref{theorem:OptimalNovikov} and obtain that it suffices to identify
$b>0$ such that
\begin{align}
    E\exp(T_b((1+b)\log(1+a)-a))&< 1+a\textrm{ and }\label{eqn:SqPrimaryFirstReq}\\
    E\exp(N_{T_b}a^2(1-\eps)\beta(a))&<\infty.\label{eqn:SqPrimarySecondReq}
\end{align}
Let $f_b$ be as in Lemma \ref{lemma:PoissonCompMart}. As in the proof of Theorem
\ref{theorem:OptimalNovikov}, we obtain that $E\exp(T_bh(b))$ is
finite, where $h(x)=(1+x)\log(1+x)-x$, and furthermore obtain that with $\lambda(b) = -\log((1+a)\frac{b}{a})$,
$E\exp(-T_bf_b(\lambda(b)))<1+a$ for $b<a$. As $N_{T_b}=(1+b)T_b-1$
almost surely and $g(b)=h(b)/(1+b)$, we
then also obtain that $E\exp(N_{T_b}g(b))$ is finite. Thus, if we can choose $b\in(0,a)$ such that
\begin{align}
  (1+b)\log(1+a)-a&\le -f_b(\lambda(b))\textrm{ and }\label{eqn:SqFirstReq}\\
  a^2(1-\eps)\beta(a)&\le g(b),\label{eqn:SqSecondReq}
\end{align}
we will obtain the desired result, as (\ref{eqn:SqFirstReq}) implies
(\ref{eqn:SqPrimaryFirstReq}) and (\ref{eqn:SqSecondReq}) implies
(\ref{eqn:SqPrimarySecondReq}). As earlier noted,
(\ref{eqn:SqFirstReq}) always holds for $0<b<a$. As for
(\ref{eqn:SqSecondReq}), this requirement is equivalent to having that
$(1-\eps)g(a)\le g(b)$ for some $b\in (0,a)$, which by continuity of
$g$ can be obtained by choosing $b$ close enough to $a$. Choosing $b$
in this manner, we obtain $M$ yielding an example proving that the
coefficient $\beta(a)$ is optimal. This concludes the proof of
optimality in the case $a>0$.

\textit{The case $a=0$.} This follows similarly to the corresponding
case in the proof of Theorem \ref{theorem:OptimalNovikov}.

\textit{The case $-1<a<0$.} Let $\eps>0$, let $-1<b<0$, let $c>0$ and
define a stopping time $T_{bc}$ by putting $T_{bc} = \inf\{t\ge0\mid N_t -
(1+b)t\ge c\}$. Also define $M$ by $M_t = a(N^{T_{bc}}_t-t\land
T_{bc})$. As in the proof of Theorem \ref{theorem:OptimalNovikov}, in
order to obtain the desired counterexample, it suffices to choose
$-1<b<0$ and $c>0$ such that
\begin{align}
    E\exp(T_{bc}((1+b)\log(1+a)-a))&< (1+a)^{-c}\textrm{ and }\label{eqn:SqPrimaryFirstReqMinus}\\
    E\exp(T_{bc}a^2(1-\eps)\beta(a))&<\infty.\label{eqn:SqPrimarySecondReqMinus}
\end{align}
With $f_b$ as in Lemma \ref{lemma:PoissonCompMart}, we find as in the
proof of Theorem \ref{theorem:OptimalNovikov} that $E\exp(T_{bc}h(b))$
is finite. Furthermore, defining
$\lambda(b,c)=(c+1)^{-1}\log((1+a)^{-c}\frac{b}{a})$, it holds for $b$
with $a<b\le (1+a)^ca$ that $\lambda(b,c)\ge0$ and
$E\exp(-T_{bc}f_b(\lambda(b,c)))<(1+a)^{-c}$. Also, as
$N_{T_{bc}}\le(1+b)T_{bc}+c+1$, $E\exp(N_{T_{bc}}(1+b)^{-1}h(b))$ and
thus $E\exp(N_{T_{bc}}g(b))$ is finite. Therefore, if we can choose
$b\in(a,0)$ and $c>0$ such that 
\begin{align}
  (1+b)\log(1+a)-a&\le -f_b(\lambda(b,c)) \textrm{ and }\label{eqn:SqFirstReqMinus}\\
  a^2(1-\eps)\beta(a)&\le g(b),\label{eqn:SqSecondReqMinus}
\end{align}
we obtain the desired result. By arguments as in the proof of the corresponding case of Theorem
\ref{theorem:OptimalNovikov}, we find that by first picking $b$ close
enough to $a$ and then $c$ large enough, we can ensure that both (\ref{eqn:SqFirstReqMinus}) and
(\ref{eqn:SqSecondReqMinus}) hold, yielding optimality for this case.

\textit{The case $a=-1$.} For this case, we need to show that for any
$\gamma\ge0$, it does not hold that finiteness of $E\exp(\gamma[M^d]_\infty)$
implies that $\EEE(M)$ is a uniformly integrable martingale. Let
$\gamma\ge0$. By Lemma \ref{lemma:alphaFun}, $\beta(a)$ tends to infinity as
$a$ tends to $-1$. Therefore, we
may pick $a>-1$ so small that $\beta(a)\ge \gamma$. By what we
already have shown, there exists $M$ with initial value zero and
$\Delta M1_{(\Delta M\neq0)}\ge -1$ such that
$E\exp(\beta(a)[M^d]_\infty)$ and thus $E\exp(\gamma[M^d]_\infty)$ is finite,
while $\EEE(M)$ is not a uniformly integrable martingale.
\end{proof}

\begin{corollary}
\label{coro:NovikovExtension}
Let $M$ be a local martingale with initial value zero and $\Delta
M\ge0$. If $\exp(\frac{1}{2}[M]_\infty)$ is integrable or if $M$ is
locally square integrable and $\exp(\frac{1}{2}\langle
M\rangle_\infty)$ is integrable, then $\EEE(M)$ is a uniformly
integrable martingale. Furthermore, this criterion is optimal in the
sense that if either the constant $\frac{1}{2}$ is reduced, or the
requirement on the jumps is weakened to $\Delta M\ge-\eps$ for some
$\eps>0$, the criterion ceases to be sufficient.
\end{corollary}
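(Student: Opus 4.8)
The plan is to deduce the corollary directly from Theorem \ref{theorem:OptimalNovikov} and Theorem \ref{theorem:OptimalNovikov2} specialized to $a=0$, exploiting the values $\alpha(0)=\beta(0)=\frac{1}{2}$ together with the strict monotonicity of $\alpha$ and $\beta$ recorded in Lemma \ref{lemma:alphaFun}. For sufficiency, I would first observe that the hypothesis $\Delta M\ge0$ is identical to $\Delta M1_{(\Delta M\neq0)}\ge0$, so both theorems apply with $a=0$. Since $[M]=[M^c]+[M^d]$, and, when $M$ is locally square integrable, $\langle M\rangle=\langle M^c\rangle+\langle M^d\rangle$, integrability of $\exp(\frac{1}{2}[M]_\infty)$ coincides with integrability of $\exp(\frac{1}{2}[M^c]_\infty+\beta(0)[M^d]_\infty)$, while integrability of $\exp(\frac{1}{2}\langle M\rangle_\infty)$ coincides with integrability of $\exp(\frac{1}{2}\langle M^c\rangle_\infty+\alpha(0)\langle M^d\rangle_\infty)$. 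Sufficiency is then immediate from Theorem \ref{theorem:OptimalNovikov2} for the $[M]$ version and from Theorem \ref{theorem:OptimalNovikov} for the $\langle M\rangle$ version.

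For optimality under a reduction of the constant, I would invoke the optimality assertions of the two theorems at $a=0$. The counterexamples furnished there are purely discontinuous, so $M^c=0$ and hence $[M]=[M^d]$ and $\langle M\rangle=\langle M^d\rangle$. A reduced positive constant can be written as $(1-\eps)\frac{1}{2}=(1-\eps)\alpha(0)=(1-\eps)\beta(0)$ for a suitable $\eps\in(0,1)$, and the optimality parts of Theorem \ref{theorem:OptimalNovikov} and Theorem \ref{theorem:OptimalNovikov2} then supply, for each such reduction, a local martingale with initial value zero and $\Delta M\ge0$ satisfying the reduced integrability condition while $\EEE(M)$ fails to be a uniformly integrable martingale; reductions to a nonpositive constant are trivially insufficient since the corresponding exponential is then bounded.

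For optimality under a weakening of the jump requirement, I would fix $\eps>0$ and set $a=-\eps$. By Lemma \ref{lemma:alphaFun} the functions $\alpha$ and $\beta$ are strictly decreasing, so $\alpha(-\eps)>\alpha(0)=\frac{1}{2}$ and $\beta(-\eps)>\beta(0)=\frac{1}{2}$. Choosing an auxiliary parameter $\eps'>0$ small enough that $(1-\eps')\beta(-\eps)>\frac{1}{2}$, the optimality part of Theorem \ref{theorem:OptimalNovikov2} at $a=-\eps$ yields a purely discontinuous local martingale $M$ with $\Delta M\ge-\eps$ for which $\exp((1-\eps')\beta(-\eps)[M^d]_\infty)$ is integrable while $\EEE(M)$ is not uniformly integrable. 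Because $[M^d]\ge0$ and $(1-\eps')\beta(-\eps)>\frac{1}{2}$, integrability of this exponential forces integrability of $\exp(\frac{1}{2}[M^d]_\infty)=\exp(\frac{1}{2}[M]_\infty)$, so the same $M$ is a counterexample to the constant-$\frac{1}{2}$ criterion once jumps down to $-\eps$ are permitted. The parallel argument with Theorem \ref{theorem:OptimalNovikov} and $\alpha$ disposes of the predictable quadratic variation version.

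The main subtlety lies in this last step: plain substitution of $a=-\eps$ into the theorems produces counterexamples calibrated to the larger optimal constants $\alpha(-\eps)$ and $\beta(-\eps)$ rather than to $\frac{1}{2}$. The idea that makes the argument go through is to combine the strict monotonicity of $\alpha$ and $\beta$ from Lemma \ref{lemma:alphaFun} with the elementary fact that, for a nonnegative random variable $X$, integrability of $\exp(cX)$ with $c>\frac{1}{2}$ entails integrability of $\exp(\frac{1}{2}X)$; this is precisely what allows the $a=-\eps$ counterexamples to also violate the weaker $\frac{1}{2}$-criterion. Everything else reduces to routine bookkeeping with the decompositions $[M]=[M^c]+[M^d]$ and $\langle M\rangle=\langle M^c\rangle+\langle M^d\rangle$ and the fact that the counterexamples are purely discontinuous.
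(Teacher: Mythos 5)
Your proof is correct and follows essentially the same route as the paper's (very terse) proof: sufficiency from Theorem \ref{theorem:OptimalNovikov} and Theorem \ref{theorem:OptimalNovikov2} at $a=0$ using $\alpha(0)=\beta(0)=\tfrac{1}{2}$, optimality of the constant from the $a=0$ optimality assertions, and optimality of the jump condition by combining the theorems at $a=-\eps$ with the strict monotonicity of $\alpha$ and $\beta$ from Lemma \ref{lemma:alphaFun}. The ``subtlety'' you flag --- that the $a=-\eps$ counterexamples are calibrated to $(1-\eps')\alpha(-\eps)>\tfrac{1}{2}$ and hence also violate the $\tfrac{1}{2}$-criterion --- is exactly the step the paper compresses into its one-line appeal to strict monotonicity, so you have simply made the paper's intended argument explicit.
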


\begin{proof}
That the constant $\frac{1}{2}$ cannot be reduced follows from
Theorem \ref{theorem:OptimalNovikov} and Theorem
\ref{theorem:OptimalNovikov2}. That the requirement on the jumps cannot
be reduced follows by combining Theorem \ref{theorem:OptimalNovikov} and Theorem
\ref{theorem:OptimalNovikov2} with the fact that $\alpha$ and $\beta$ both are strictly
decreasing by Lemma \ref{lemma:alphaFun}.
\end{proof}

\bibliographystyle{amsplain}

\end{document}